\documentclass[11pt]{article}

\usepackage[margin=1in]{geometry}
\usepackage[T1]{fontenc}
\usepackage[utf8]{inputenc}
\usepackage{lmodern}
\usepackage{amsmath,amssymb,amsthm,mathtools}
\usepackage[hidelinks]{hyperref}

\numberwithin{equation}{section}

\newtheorem{theorem}{Theorem}[section]
\newtheorem{lemma}[theorem]{Lemma}
\newtheorem{corollary}[theorem]{Corollary}
\newtheorem{proposition}[theorem]{Proposition}
\theoremstyle{definition}
\newtheorem{definition}[theorem]{Definition}
\theoremstyle{remark}
\newtheorem{remark}[theorem]{Remark}

\title{Distributional Point Values for Borel and Symmetric Borel Derivatives}
\author{Subhasis Ray\\
Department of Mathematics, Siksha Bhavana, Visva-Bharati,\\
Santiniketan 731235, West Bengal, India\\
\texttt{subhasis.ray@visva-bharati.ac.in}}
\date{}

\begin{document}
\maketitle

\begin{abstract}
Borel and symmetric Borel derivatives are generalized derivatives defined through local averages
of difference quotients.
Distributional point values, in the sense of {\L}ojasiewicz and its symmetric variants, are a
classical way of describing the local value of a distribution.
This paper connects these two ideas.
Writing $T_f$ for the regular distribution generated by $f$, we prove that finite first and
second symmetric Borel derivatives give symmetric distributional point values of $T_f'$ and
$T_f''$, respectively.
For the first symmetric derivative, Borel smoothness is used as a sufficient condition to pass from
the symmetric point value to the full {\L}ojasiewicz point value.
We also prove that the one-sided Borel derivatives determine the right and left distributional point
values of $T_f'$, and that the ordinary Borel derivative gives the full point value when the two
one-sided averages agree.
Examples show why the second-order symmetric result cannot be strengthened automatically.
\end{abstract}

\noindent\textbf{Keywords.} Borel derivative; symmetric Borel derivative; Borel smoothness; distributional point value; {\L}ojasiewicz point value.

\medskip
\noindent\textbf{2020 Mathematics Subject Classification.} Primary 26A24; Secondary 46F10.

\section{Introduction}
The Borel derivative was introduced by E.~Borel, who called it the average derivative.
Its first-order theory was studied by A.~Khintchine \cite{Khintchine1927},
J.~Marcinkiewicz and A.~Zygmund \cite{MarcinkiewiczZygmund1936},
and W.L.C.~Sargent \cite{Sargent1935}.
The Borel derivative is a generalized derivative: it may exist even when the ordinary derivative
fails to exist.
Mukhopadhyay's monograph \cite{Mukhopadhyay2012} later treated Borel and symmetric
Borel derivatives as part of the wider theory of higher-order generalized derivatives.
Ray and Garai \cite{RayGarai2014} subsequently studied the first-order Borel derivative from an
elementary real-variable point of view.

The distributional point value is another classical local notion.
It begins with the work of {\L}ojasiewicz \cite{Lojasiewicz1957} and has been developed further in
terms of symmetric point values, lateral limits, jump behavior, and delta sequences
\cite{VindasEstrada2007Fourier,VindasEstrada2007Reg,VindasEstrada2008Jump,VindasEstrada2010,EstradaKellinsky2022}.
These point values describe when a distribution has a meaningful value at a point, even though
ordinary pointwise evaluation is not available for general distributions.

The purpose of this paper is to connect these two theories.
The basic theory of Borel and symmetric Borel derivatives is already available in the generalized
derivative literature, and the general theory of distributional point values is also well developed.
What is new here is the bridge between them: we show that Borel-type averaged difference
quotients give symmetric, one-sided, or full distributional point values of the corresponding
distributional derivatives.

The reason this bridge works is simple.
The kernels that occur in Borel averages have the same averaging character as the smooth kernels
used to test distributions.
After rewriting the distributional pairings with $T_f'$ and $T_f''$, the Borel averages appear as
weighted mollifier averages of the same local difference quotients.
This allows us to pass from the existence of Borel-type derivatives to distributional point values.

For symmetric Borel derivatives, the first and second order results give symmetric point values of
$T_f'$ and $T_f''$.
In the first-order case, Borel smoothness is a sufficient local condition that removes the odd part
of the test functions and gives the full {\L}ojasiewicz point value of $T_f'$.
For the ordinary Borel derivative, the one-sided Borel averages give right and left distributional
point values, and their agreement gives the full point value.
Examples show why these statements cannot be automatically strengthened, especially in the
second-order case.

\section{Preliminaries}

Throughout the paper, $x_0$ denotes a fixed point of $\mathbb{R}$ and $f$ is a real-valued function
defined in a neighborhood of $x_0$.
Whenever $f\in L^1_{\mathrm{loc}}(\mathbb{R})$, we work with a fixed pointwise representative of $f$
near $x_0$. The quantities $D_1$, $D_2$, $B_\pm$, $SBD^1$, $SBD^2$, and $BD_1^\pm$ depend on that
representative, whereas the regular distribution $T_f$ does not.
For $t\neq 0$ we set
\[
D_1(t;x_0;f)=\frac{f(x_0+t)-f(x_0-t)}{2t},
\qquad
D_2(t;x_0;f)=\frac{f(x_0+t)+f(x_0-t)-2f(x_0)}{t^2}.
\]

\begin{definition}
The first and second symmetric Borel derivatives of $f$ at $x_0$ are defined, whenever the limits exist finitely, by
\begin{align*}
SBD^{1}f(x_0)&=\lim_{h\to 0^+}\frac{1}{h}\int_0^h D_1(t;x_0;f)\,dt,\\
SBD^{2}f(x_0)&=\lim_{h\to 0^+}\frac{1}{h}\int_0^h D_2(t;x_0;f)\,dt.
\end{align*}
\end{definition}

\begin{definition}
The function $f$ is said to be \emph{Borel smooth} at $x_0$ if
\[
\lim_{h\to 0^+}\frac{1}{h}\int_0^h
\frac{f(x_0+t)+f(x_0-t)-2f(x_0)}{t}\,dt =0.
\]
\end{definition}

The standard theory of symmetric Borel derivatives and Borel smoothness,
together with their relations to other higher-order generalized derivatives,
can be found in \cite{Mukhopadhyay2012}. The definitions above are the only
facts from that theory needed below.

\section{Distributional Point Values for Borel Averages}\label{sec:dpv}

This section contains the main results of the paper.
We first study the first symmetric Borel derivative and its full point-value upgrade under
Borel smoothness.
We then show that the ordinary Borel derivative naturally gives right and left distributional
point values.
After that we return to the second symmetric Borel derivative and to the parity obstruction
coming from point-supported singularities.

\begin{definition}
Let $\mathcal{D}(\mathbb{R})=C_c^\infty(\mathbb{R})$ and let $\mathcal{D}'(\mathbb{R})$ be the
space of distributions.
If $f\in L^1_{\mathrm{loc}}(\mathbb{R})$, we identify $f$ with the regular distribution
$T_f\in\mathcal{D}'(\mathbb{R})$ defined by
\[
\langle T_f,\psi\rangle=\int_{\mathbb{R}} f(x)\psi(x)\,dx,
\qquad \psi\in\mathcal{D}(\mathbb{R}).
\]
Its first and second distributional derivatives are denoted by $T_f'$ and $T_f''$, that is,
\[
\langle T_f',\psi\rangle=-\langle T_f,\psi'\rangle,
\qquad
\langle T_f'',\psi\rangle=\langle T_f,\psi''\rangle,
\qquad \psi\in\mathcal{D}(\mathbb{R}).
\]
\end{definition}

\begin{definition}
Let $T\in\mathcal{D}'(\mathbb{R})$ and $x_0\in\mathbb{R}$.
We say that $T$ has the \emph{{\L}ojasiewicz distributional point value} $L\in\mathbb{R}$ at $x_0$ if
\[
\lim_{\varepsilon\to 0^+}\langle T,\phi_{x_0,\varepsilon}\rangle
=L\int_{\mathbb{R}}\phi(u)\,du
\qquad \text{for every } \phi\in\mathcal{D}(\mathbb{R}),
\]
where
\[
\phi_{x_0,\varepsilon}(x)=\varepsilon^{-1}\phi\!\left(\frac{x-x_0}{\varepsilon}\right).
\]
\end{definition}

\begin{definition}
Let $T\in\mathcal{D}'(\mathbb{R})$ and $x_0\in\mathbb{R}$.
We say that $T$ has the \emph{symmetric distributional point value} $L\in\mathbb{R}$ at $x_0$ if
for every even $\phi\in\mathcal{D}(\mathbb{R})$,
\[
\lim_{\varepsilon\to 0^+}\langle T,\phi_{x_0,\varepsilon}\rangle
=L\int_{\mathbb{R}}\phi(u)\,du.
\]
In that case we write $\mathrm{sv}\,T(x_0)=L$.
\end{definition}
\begin{remark}\label{rem:symmetric-pv}
The preceding definition is the symmetric version of the usual distributional point value.
In one dimension, it is equivalent to testing against all even test functions,
or equivalently against the family of standard delta sequences generated by
positive normalized even test functions \cite{EstradaKellinsky2022}.
If the full {\L}ojasiewicz point value of $T$ exists at $x_0$, then the symmetric
point value also exists and agrees with it. The converse need not hold.
\end{remark}

\begin{lemma}\label{lem:kernel-representation} Change korte hobe
Let $f\in L^1_{\mathrm{loc}}(\mathbb{R})$ and let $\phi\in\mathcal{D}(\mathbb{R})$ be even.
Define, for $u\ge 0$,
\[
W_1(u)=-2u\,\phi'(u),
\qquad
W_2(u)=u^2\,\phi''(u).
\]
Then $W_1,W_2\in C_c^1([0,\infty))$ and
\[
\int_0^\infty W_1(u)\,du=\int_{\mathbb{R}}\phi(u)\,du,
\qquad
\int_0^\infty W_2(u)\,du=\int_{\mathbb{R}}\phi(u)\,du.
\]
Moreover, for every $\varepsilon>0$,
\begin{align}
\langle T_f',\phi_{x_0,\varepsilon}\rangle
&=\int_0^\infty D_1(\varepsilon u;x_0;f)\,W_1(u)\,du,\label{eq:pairing-fprime}\\[1mm]
\langle T_f'',\phi_{x_0,\varepsilon}\rangle
&=\int_0^\infty D_2(\varepsilon u;x_0;f)\,W_2(u)\,du.\label{eq:pairing-fdoubleprime}
\end{align}
\end{lemma}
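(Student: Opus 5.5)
The argument is a direct computation: change variables, split the integral at $x_0$, and use the parity of $\phi$. Write $\phi_{x_0,\varepsilon}(x)=\varepsilon^{-1}\phi((x-x_0)/\varepsilon)$. Then
\[
\phi_{x_0,\varepsilon}'(x)=\varepsilon^{-2}\phi'\!\left(\frac{x-x_0}{\varepsilon}\right),
\qquad
\phi_{x_0,\varepsilon}''(x)=\varepsilon^{-3}\phi''\!\left(\frac{x-x_0}{\varepsilon}\right).
\]

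\textbf{Regularity of the kernels.} Since $\phi$ is smooth with compact support, $W_1$ and $W_2$ restricted to $[0,\infty)$ are smooth, in particular $C^1$, and vanish for large $u$. So $W_1,W_2\in C_c^1([0,\infty))$.

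\textbf{Total mass of $W_1$.} Integrating by parts, the boundary terms vanish because $u\phi(u)$ is $0$ at $u=0$ and at the edge of the support:
\[
\int_0^\infty -2u\phi'(u)\,du=2\int_0^\infty\phi(u)\,du=\int_{\mathbb{R}}\phi(u)\,du,
\]
where the last step uses that $\phi$ is even.

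\textbf{Total mass of $W_2$.} Integrating by parts twice,
\[
\int_0^\infty u^2\phi''(u)\,du=-\int_0^\infty 2u\phi'(u)\,du=\int_{\mathbb{R}}\phi(u)\,du.
\]
The first boundary term $u^2\phi'(u)$ vanishes at $0$ and at infinity, and the second step reuses the computation for $W_1$.

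\textbf{The first pairing.} We have $\langle T_f',\phi_{x_0,\varepsilon}\rangle=-\varepsilon^{-2}\int f(x)\,\phi'((x-x_0)/\varepsilon)\,dx$. Substitute $x=x_0+\varepsilon u$ to get $-\varepsilon^{-1}\int_{\mathbb{R}} f(x_0+\varepsilon u)\,\phi'(u)\,du$. Split this into $u>0$ and $u<0$, and replace $u$ by $-u$ on the negative half-line, using that $\phi'$ is odd. This gives
\[
-\varepsilon^{-1}\int_0^\infty\bigl[f(x_0+\varepsilon u)-f(x_0-\varepsilon u)\bigr]\phi'(u)\,du.
\]
The bracket equals $2\varepsilon u\,D_1(\varepsilon u;x_0;f)$, so the expression is exactly $\int_0^\infty D_1(\varepsilon u)\,W_1(u)\,du$.

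\textbf{The second pairing.} Similarly, $\langle T_f'',\phi_{x_0,\varepsilon}\rangle=\varepsilon^{-2}\int_{\mathbb{R}} f(x_0+\varepsilon u)\,\phi''(u)\,du$. Since $\phi''$ is even, folding to $[0,\infty)$ gives $\varepsilon^{-2}\int_0^\infty[f(x_0+\varepsilon u)+f(x_0-\varepsilon u)]\phi''(u)\,du$. To produce the $-2f(x_0)$ term of $D_2$, note that
\[
\int_0^\infty\phi''(u)\,du=-\phi'(0)=0,
\]
again because $\phi'$ is odd. So we may subtract $2f(x_0)\phi''(u)$ inside the integral at no cost. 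The result is $\int_0^\infty D_2(\varepsilon u)\,u^2\phi''(u)\,du$, as claimed.

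\textbf{Integrability.} The only delicate point is that the integrals in \eqref{eq:pairing-fprime}--\eqref{eq:pairing-fdoubleprime} converge absolutely near $u=0$, where $D_1$ and $D_2$ carry the factors $1/t$ and $1/t^2$. In each case the factor cancels exactly against $u$ or $u^2$ in the kernel. For example, $D_1(\varepsilon u)\,W_1(u)=-\varepsilon^{-1}[f(x_0+\varepsilon u)-f(x_0-\varepsilon u)]\phi'(u)$, which is integrable because $f\in L^1_{\mathrm{loc}}$ and $\phi'$ is bounded with compact support; the $D_2$ case is the same. This justifies splitting the integrals and separating the constant term $f(x_0)$. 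That constant depends on the chosen representative, but it contributes zero after integration, consistent with the remark in the preliminaries.
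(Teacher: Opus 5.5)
Your proposal is correct and follows essentially the same route as the paper: integration by parts for the total masses, the substitution $x=x_0+\varepsilon u$, folding onto $[0,\infty)$ via the oddness of $\phi'$ and evenness of $\phi''$, and inserting $-2f(x_0)$ using $\int_0^\infty\phi''(u)\,du=0$. Your closing integrability remark, that the factors $1/t$ and $1/t^2$ in $D_1,D_2$ cancel against $u$ and $u^2$ in the kernels, is a small justification the paper leaves implicit.
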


\begin{proof}
Clearly $W_1,W_2\in C_c^1([0,\infty))$, $\phi'$ is odd, and $\phi'(0)=0$.
Using integration by parts 
\begin{align*}
\int_0^\infty W_1(u)\,du
&=\int_0^\infty (-2u\phi'(u))\,du
=2\int_0^\infty \phi(u)\,du
=\int_{\mathbb{R}}\phi(u)\,du,
\end{align*}
and similarly,
\begin{align*}
\int_0^\infty W_2(u)\,du
&=\int_0^\infty u^2\phi''(u)\,du
=\Big[u^2\phi'(u)\Big]_0^\infty-\int_0^\infty 2u\phi'(u)\,du\\
&=-2\int_0^\infty u\phi'(u)\,du
=-2\Big(\Big[u\phi(u)\Big]_0^\infty-\int_0^\infty \phi(u)\,du\Big)\\
&=2\int_0^\infty \phi(u)\,du
=\int_{\mathbb{R}}\phi(u)\,du.
\end{align*}

By definition of distributional derivative,
\[
\langle T_f',\phi_{x_0,\varepsilon}\rangle
=-\int_{\mathbb{R}}f(x)(\phi_{x_0,\varepsilon})'(x)\,dx.
\]
Since $(\phi_{x_0,\varepsilon})'(x)=\varepsilon^{-2}\phi'((x-x_0)/\varepsilon)$, the change of
variable $x=x_0+\varepsilon u$ gives
\[
\langle T_f',\phi_{x_0,\varepsilon}\rangle
=-\frac{1}{\varepsilon}\int_{\mathbb{R}}f(x_0+\varepsilon u)\phi'(u)\,du.
\]
Because $\phi'$ is odd,
\[
\int_{\mathbb{R}}f(x_0+\varepsilon u)\phi'(u)\,du
=\int_0^\infty \big(f(x_0+\varepsilon u)-f(x_0-\varepsilon u)\big)\phi'(u)\,du,
\]
and therefore
\[
\langle T_f',\phi_{x_0,\varepsilon}\rangle
=-\frac{1}{\varepsilon}\int_0^\infty
\big(f(x_0+\varepsilon u)-f(x_0-\varepsilon u)\big)\phi'(u)\,du
=\int_0^\infty D_1(\varepsilon u;x_0;f)\,(-2u\phi'(u))\,du,
\]
which proves \eqref{eq:pairing-fprime}.

For \eqref{eq:pairing-fdoubleprime}, we use $\langle T_f'',\psi\rangle=\langle T_f,\psi''\rangle$:
\[
\langle T_f'',\phi_{x_0,\varepsilon}\rangle
=\int_{\mathbb{R}}f(x)(\phi_{x_0,\varepsilon})''(x)\,dx
=\frac{1}{\varepsilon^2}\int_{\mathbb{R}}f(x_0+\varepsilon u)\phi''(u)\,du.
\]
Since $\phi''$ is even,
\[
\int_{\mathbb{R}}f(x_0+\varepsilon u)\phi''(u)\,du
=\int_0^\infty \big(f(x_0+\varepsilon u)+f(x_0-\varepsilon u)\big)\phi''(u)\,du.
\]
Also $\int_0^\infty \phi''(u)\,du=\phi'(\infty)-\phi'(0)=0$, so we may insert
$-2f(x_0)$ inside the bracket without changing the integral.
Thus
\begin{align*}
\langle T_f'',\phi_{x_0,\varepsilon}\rangle
&=\frac{1}{\varepsilon^2}\int_0^\infty
\big(f(x_0+\varepsilon u)+f(x_0-\varepsilon u)-2f(x_0)\big)\phi''(u)\,du\\
&=\int_0^\infty D_2(\varepsilon u;x_0;f)\,(u^2\phi''(u))\,du,
\end{align*}
which proves \eqref{eq:pairing-fdoubleprime}.
\end{proof}

\begin{lemma}\label{lem:abelian} Change korte hobe
Let $g\in L^1_{\mathrm{loc}}(0,\infty)$ and define
\[
G(h)=\frac{1}{h}\int_0^h g(t)\,dt,
\qquad h>0.
\]
Assume that $\lim_{h\to 0^+}G(h)=L$ is finite.
Then for every $W\in C_c^1([0,\infty))$,
\[
\lim_{\varepsilon\to 0^+}\int_0^\infty g(\varepsilon u)W(u)\,du
=L\int_0^\infty W(u)\,du.
\]
\end{lemma}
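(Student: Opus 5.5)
The plan is to integrate by parts so that the Cesàro means $G$ appear in place of $g$. Set $\Phi(h)=\int_0^h g(t)\,dt=hG(h)$, which is absolutely continuous on $[0,R]$ with $\Phi(0)=0$. By the change of variables $t=\varepsilon u$,
\[
\int_0^\infty g(\varepsilon u)W(u)\,du=\frac{1}{\varepsilon}\int_0^\infty g(t)\,W(t/\varepsilon)\,dt .
\]
Since $W\in C^1_c([0,\infty))$, choose $R$ with $\operatorname{supp}W\subset[0,R]$. Integrating by parts against $d\Phi$, the boundary term at $t=\varepsilon R$ vanishes because $W(R)=0$, and the term at $0$ vanishes because $\Phi(0)=0$. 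This gives
\[
\int_0^\infty g(\varepsilon u)W(u)\,du=-\frac{1}{\varepsilon^2}\int_0^{\varepsilon R}\Phi(t)W'(t/\varepsilon)\,dt=-\int_0^R G(\varepsilon u)\,u\,W'(u)\,du,
\]
where the last step substitutes back $t=\varepsilon u$ and uses $\Phi(\varepsilon u)=\varepsilon u\,G(\varepsilon u)$. The integration by parts is legitimate because $\Phi$ is absolutely continuous and $W(\cdot/\varepsilon)$ is $C^1$.

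Next I pass to the limit by dominated convergence. Since $G(h)\to L$, there is $\delta>0$ with $|G(h)|\le |L|+1$ for $0<h<\delta$. For $\varepsilon<\delta/R$ and $u\in(0,R]$ we have $\varepsilon u<\delta$, so the integrand is bounded by $(|L|+1)\,u|W'(u)|$. This bound is integrable on $[0,R]$, and the integrand converges pointwise to $L\,uW'(u)$ for every $u\in(0,R]$. Hence the limit equals $-L\int_0^R uW'(u)\,du$. A final integration by parts gives $-\int_0^R uW'(u)\,du=\int_0^R W(u)\,du$, again because $RW(R)=0$. This yields the claim.

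The main obstacle is the justification near $t=0$, since $g$ is only locally integrable and need not be bounded there. It is handled by working with $\Phi$ rather than $g$, and by noting that finiteness of the limit of $G$ gives uniform boundedness of $G$ only on a small interval $(0,\delta)$. Shrinking $\varepsilon$ places the whole rescaled support $[0,\varepsilon R]$ inside that interval, so boundedness of $G$ away from $0$ is never needed.
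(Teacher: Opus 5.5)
Your proof is correct and follows essentially the same route as the paper. Both arguments integrate by parts against the primitive $\int_0^{\cdot} g$ to get $-\int_0^R uG(\varepsilon u)W'(u)\,du$, pass to the limit by dominated convergence, and integrate by parts once more; the differences are cosmetic (you integrate in $t=\varepsilon u$ and rescale, while the paper works with $H_\varepsilon(u)=uG(\varepsilon u)$), and your explicit choice of $\delta$ with $\varepsilon<\delta/R$ states the domination step more precisely than the paper's ``$G$ is bounded near $0$.''
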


\begin{proof}
Let $W\in C_c^1([0,\infty))$ and choose $A>0$ such that ${\rm supp}(W)\subset[0,A]$.
Set $F(t)=\int_0^t g(s)\,ds$.
Then $F$ is absolutely continuous and $F(t)=tG(t)$.
For fixed $\varepsilon>0$, define
\[
H_\varepsilon(u)=\int_0^u g(\varepsilon s)\,ds
=\frac{1}{\varepsilon}\int_0^{\varepsilon u}g(t)\,dt
=uG(\varepsilon u).
\]
By integration by parts,
\begin{align*}
\int_0^\infty g(\varepsilon u)W(u)\,du
&=\int_0^A g(\varepsilon u)W(u)\,du\\
&=\Big[H_\varepsilon(u)W(u)\Big]_0^A-\int_0^A H_\varepsilon(u)W'(u)\,du\\
&=-\int_0^A uG(\varepsilon u)W'(u)\,du,
\end{align*}
since $W(A)=0$ and $H_\varepsilon(0)=0$.
As $\varepsilon\to 0^+$, we have $G(\varepsilon u)\to L$ for each $u\in[0,A]$, and
$G$ is bounded near $0$ because it converges.
Therefore dominated convergence gives
\[
\lim_{\varepsilon\to 0^+}\int_0^\infty g(\varepsilon u)W(u)\,du
=-L\int_0^A uW'(u)\,du.
\]
A second integration by parts yields
\[
-\int_0^A uW'(u)\,du
=-\Big[uW(u)\Big]_0^A+\int_0^A W(u)\,du
=\int_0^\infty W(u)\,du.
\]
This completes the proof.
\end{proof}

\subsection{The first symmetric Borel derivative}

\begin{theorem}\label{th:SBD1-dpv}
Let $f\in L^1_{\mathrm{loc}}(\mathbb{R})$ and let $x_0\in\mathbb{R}$.
If $SBD^{1}f(x_0)$ exists finitely, say $SBD^{1}f(x_0)=L$, then for every even
$\phi\in\mathcal{D}(\mathbb{R})$,
\[
\lim_{\varepsilon\to 0^+}\langle T_f',\phi_{x_0,\varepsilon}\rangle
=L\int_{\mathbb{R}}\phi(u)\,du.
\]
In particular, $T_f'$ has symmetric distributional point value $L$ at $x_0$.
\end{theorem}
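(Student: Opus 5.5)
The plan is to combine the two lemmas just proved; the theorem should follow almost immediately. Fix an even $\phi\in\mathcal{D}(\mathbb{R})$ and set $W_1(u)=-2u\phi'(u)$ for $u\ge 0$. By Lemma~\ref{lem:kernel-representation}, $W_1\in C_c^1([0,\infty))$, $\int_0^\infty W_1=\int_{\mathbb{R}}\phi$, and for every $\varepsilon>0$
\[
\langle T_f',\phi_{x_0,\varepsilon}\rangle=\int_0^\infty D_1(\varepsilon u;x_0;f)\,W_1(u)\,du .
\]
The problem is thereby reduced to the behaviour of a weighted average of the symmetric difference quotient $g(t):=D_1(t;x_0;f)$.

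Next I apply Lemma~\ref{lem:abelian} with $g(t)=D_1(t;x_0;f)$ and $W=W_1$. Its hypothesis is exactly that $G(h)=h^{-1}\int_0^h g(t)\,dt\to L$, which is the assumption $SBD^1f(x_0)=L$. The lemma then gives
\[
\lim_{\varepsilon\to0^+}\int_0^\infty g(\varepsilon u)W_1(u)\,du=L\int_0^\infty W_1(u)\,du=L\int_{\mathbb{R}}\phi(u)\,du,
\]
which is the displayed limit. Since $\phi$ was an arbitrary even test function, the definition of symmetric point value yields $\mathrm{sv}\,T_f'(x_0)=L$.

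The only step needing real care is checking that $g\in L^1_{\mathrm{loc}}(0,\infty)$, as Lemma~\ref{lem:abelian} requires, and that the integral defining $SBD^1$ makes sense. On compact subintervals $[a,b]\subset(0,\infty)$ this is clear: $f(x_0\pm t)$ is locally integrable and $1/(2t)$ is bounded there. Near $t=0$ there is a genuine issue, since $1/t$ is not integrable. I would argue that the existence of $SBD^1f(x_0)$, understood as a finite limit of Lebesgue integrals $\int_0^h D_1\,dt$, already presupposes that $D_1(\cdot;x_0;f)$ is integrable on each $(0,h]$. I would state this interpretation explicitly, so that $g\in L^1_{\mathrm{loc}}$ on $[0,\infty)$ in the sense used in the proof of Lemma~\ref{lem:abelian}: $F(t)=\int_0^t g$ is absolutely continuous and $H_\varepsilon(0)=0$. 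With that settled, the remaining hypotheses of Lemma~\ref{lem:abelian} hold (in particular $G$ is bounded near $0$ because it converges), and no further obstacle arises.
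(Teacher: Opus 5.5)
Your proposal is correct and follows the paper's proof essentially verbatim: you fix an even $\phi$, rewrite the pairing through $W_1$ from Lemma~\ref{lem:kernel-representation}, and apply Lemma~\ref{lem:abelian} with $g=D_1(\cdot;x_0;f)$. Your added remark on integrability of $D_1$ near $t=0$ makes explicit an interpretation of the definition of $SBD^1$ that the paper leaves implicit; note also that the pairing $\int_0^\infty D_1(\varepsilon u;x_0;f)W_1(u)\,du$ is always a genuine Lebesgue integral, since the factor $u$ in $W_1$ cancels the $1/t$ singularity.
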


\begin{proof}
Fix an even $\phi\in\mathcal{D}(\mathbb{R})$ and let $W_1$ be as in
Lemma \ref{lem:kernel-representation}.
Put $g(t)=D_1(t;x_0;f)$ for $t>0$.
By definition of $SBD^{1}f(x_0)$,
\[
\lim_{h\to 0^+}\frac{1}{h}\int_0^h g(t)\,dt=L.
\]
Since $\int_0^\infty W_1(u)\,du=\int_{\mathbb{R}}\phi(u)\,du$,
Lemma \ref{lem:abelian} gives
\[
\lim_{\varepsilon\to 0^+}\int_0^\infty g(\varepsilon u)W_1(u)\,du
=L\int_{\mathbb{R}}\phi(u)\,du.
\]
By Lemma \ref{lem:kernel-representation}, the integral on the left equals
$\langle T_f',\phi_{x_0,\varepsilon}\rangle$.
Hence the theorem follows.
\end{proof}

Theorem \ref{th:SBD1-dpv} captures the contribution of the even part of the test family.
A natural additional hypothesis is Borel smoothness. In the first-order case it controls the odd
part of the test family and therefore upgrades the symmetric point value to the full
{\L}ojasiewicz point value.

\begin{theorem}\label{th:SBD1-loj}
Let $f\in L^1_{\mathrm{loc}}(\mathbb{R})$ and let $x_0\in\mathbb{R}$.
Assume that $SBD^{1}f(x_0)=L$ and that $f$ is Borel smooth at $x_0$.
Then $T_f'$ has the {\L}ojasiewicz distributional point value $L$ at $x_0$.
Equivalently,
\[
\lim_{\varepsilon\to 0^+}\langle T_f',\phi_{x_0,\varepsilon}\rangle
=L\int_{\mathbb{R}}\phi(u)\,du
\qquad \text{for every } \phi\in\mathcal{D}(\mathbb{R}).
\]
\end{theorem}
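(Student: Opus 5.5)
Any $\phi\in\mathcal{D}(\mathbb{R})$ can be split as $\phi=\phi_e+\phi_o$, with $\phi_e(u)=\tfrac12(\phi(u)+\phi(-u))$ even and $\phi_o(u)=\tfrac12(\phi(u)-\phi(-u))$ odd. Since $\int_{\mathbb{R}}\phi_e=\int_{\mathbb{R}}\phi$ and $\int_{\mathbb{R}}\phi_o=0$, and since Theorem \ref{th:SBD1-dpv} already gives $\langle T_f',(\phi_e)_{x_0,\varepsilon}\rangle\to L\int_{\mathbb{R}}\phi$, it remains to show
\[
\lim_{\varepsilon\to 0^+}\langle T_f',(\phi_o)_{x_0,\varepsilon}\rangle=0
\qquad\text{for every odd }\phi_o\in\mathcal{D}(\mathbb{R}).
\]
The plan is to establish an odd analogue of Lemma \ref{lem:kernel-representation} and then apply Lemma \ref{lem:abelian} with $g(t)=\bigl(f(x_0+t)+f(x_0-t)-2f(x_0)\bigr)/t$, whose Borel average tends to $0$ by the Borel smoothness hypothesis.

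\emph{Kernel representation for odd test functions.} For odd $\psi=\phi_o$, the derivative $\psi'$ is even. Exactly as in the proof of Lemma \ref{lem:kernel-representation},
\[
\langle T_f',\psi_{x_0,\varepsilon}\rangle=-\frac1\varepsilon\int_{\mathbb{R}}f(x_0+\varepsilon u)\psi'(u)\,du
=-\frac1\varepsilon\int_0^\infty\bigl(f(x_0+\varepsilon u)+f(x_0-\varepsilon u)\bigr)\psi'(u)\,du .
\]
We have $\int_0^\infty\psi'(u)\,du=-\psi(0)=0$ because $\psi$ is odd, so we may subtract $2f(x_0)$ inside the bracket. Writing $\varepsilon u\cdot g(\varepsilon u)$ for the bracket gives
\[
\langle T_f',\psi_{x_0,\varepsilon}\rangle=\int_0^\infty g(\varepsilon u)\,W_0(u)\,du,
\qquad W_0(u)=-u\,\psi'(u).
\]
Here $W_0\in C_c^1([0,\infty))$ (indeed smooth and compactly supported). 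One needs $g\in L^1_{\mathrm{loc}}(0,\infty)$, which holds since $f\in L^1_{\mathrm{loc}}$ and $1/t$ is bounded away from $0$. Lemma \ref{lem:abelian} only uses integrability near $0$ through the existence of the limit of $G$, which is given.

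\emph{Conclusion.} By Borel smoothness, $\frac1h\int_0^h g\to 0$. Lemma \ref{lem:abelian}, applied with limit $0$ and $W=W_0$, then gives $\langle T_f',(\phi_o)_{x_0,\varepsilon}\rangle\to 0\cdot\int_0^\infty W_0=0$. Combining this with the even part yields the full {\L}ojasiewicz point value $L$.

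The main obstacle is only bookkeeping. The insertion of $-2f(x_0)$ requires $\psi(0)=0$, which oddness provides. We must also check that Lemma \ref{lem:abelian} applies even though $g$ may fail to be integrable at $0$. In its proof the integration by parts uses $F(t)=tG(t)$, so I will note that the Borel smoothness definition implicitly presupposes that $\int_0^h g$ exists, possibly as an improper integral near $0$. With that reading, the same integration by parts goes through on $[\delta,A]$ with $\delta\to0$.
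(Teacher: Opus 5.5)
Your proposal is correct and follows the paper's proof essentially step for step. Both arguments split $\phi=\phi_e+\phi_o$ and handle $\phi_e$ by Theorem \ref{th:SBD1-dpv}; for the odd part, both subtract $2f(x_0)$ using $\int_0^\infty\phi_o'=-\phi_o(0)=0$ and apply Lemma \ref{lem:abelian} to the Borel-smoothness quotient with kernel $W(u)=-u\phi_o'(u)$ and limit $0$. Your added remark on the integrability of $g$ near $0$ is a valid refinement of a point the paper's Lemma \ref{lem:abelian} passes over: one integrates by parts on $[\delta,A]$ and uses that $G$ is bounded near $0$, and here the pairing is a genuine Lebesgue integral because the factor $u$ in $W$ cancels the $1/t$.
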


\begin{proof}
Let $\phi\in\mathcal{D}(\mathbb{R})$ be arbitrary and write
\[
\phi=\phi_e+\phi_o,
\qquad
\phi_e(u)=\frac{\phi(u)+\phi(-u)}{2},
\qquad
\phi_o(u)=\frac{\phi(u)-\phi(-u)}{2}.
\]
Since $\phi_e$ is even by Theorem \ref{th:SBD1-dpv} 
\[
\lim_{\varepsilon\to 0^+}\langle T_f',(\phi_e)_{x_0,\varepsilon}\rangle
=L\int_{\mathbb{R}}\phi_e(u)\,du.
\]

For the odd part we have $\phi_o'$ is even.
Therefore
\begin{align*}
\langle T_f',(\phi_o)_{x_0,\varepsilon}\rangle
&=-\frac{1}{\varepsilon}\int_{\mathbb{R}}f(x_0+\varepsilon u)\phi_o'(u)\,du\\
&=-\frac{1}{\varepsilon}\int_0^\infty
\big(f(x_0+\varepsilon u)+f(x_0-\varepsilon u)\big)\phi_o'(u)\,du.
\end{align*}
Since $\phi_o(0)=0$ and $\phi_o$ has compact support,
\[
\int_0^\infty \phi_o'(u)\,du=\phi_o(\infty)-\phi_o(0)=0.
\]
Thus we get
\[
\langle T_f',(\phi_o)_{x_0,\varepsilon}\rangle
=-\frac{1}{\varepsilon}\int_0^\infty
\big(f(x_0+\varepsilon u)+f(x_0-\varepsilon u)-2f(x_0)\big)\phi_o'(u)\,du.
\]
Let
\[
g(t)=\frac{f(x_0+t)+f(x_0-t)-2f(x_0)}{t},
\qquad
W(u)=-u\phi_o'(u).
\]
Then $W\in C_c^1([0,\infty))$ and
\[
\langle T_f',(\phi_o)_{x_0,\varepsilon}\rangle
=\int_0^\infty g(\varepsilon u)W(u)\,du.
\]
Since $f$ is Borel smooth at $x_0$,
\[
\lim_{h\to 0^+}\frac{1}{h}\int_0^h g(t)\,dt=0.
\]
Applying Lemma \ref{lem:abelian} with $L=0$, we get
\[
\lim_{\varepsilon\to 0^+}\langle T_f',(\phi_o)_{x_0,\varepsilon}\rangle=0.
\]
Hence
\begin{align*}
\lim_{\varepsilon\to 0^+}\langle T_f',\phi_{x_0,\varepsilon}\rangle
&=\lim_{\varepsilon\to 0^+}\langle T_f',(\phi_e)_{x_0,\varepsilon}\rangle
+\lim_{\varepsilon\to 0^+}\langle T_f',(\phi_o)_{x_0,\varepsilon}\rangle\\
&=L\int_{\mathbb{R}}\phi_e(u)\,du
=L\int_{\mathbb{R}}\phi(u)\,du,
\end{align*}
because $\int_{\mathbb{R}}\phi_o(u)\,du=0$.
This completes the proof.
\end{proof}

\begin{proposition}
Borel smoothness in Theorem \ref{th:SBD1-loj} is essential:
For $f(x)=|x|$ one has
\[
SBD^{1}f(0)=0,
\]
but $f$ is not Borel smooth at $0$ and $T_f'=\operatorname{sgn}$ has no {\L}ojasiewicz
point value at $0$.
\end{proposition}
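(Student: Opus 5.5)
The statement consists of three direct computations for $f(x)=|x|$ at $x_0=0$, and I would carry them out in that order.

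\emph{First claim.} Since $f$ is even, $D_1(t;0;f)=\bigl(|t|-|-t|\bigr)/(2t)=0$ for every $t\neq 0$. Hence every Borel average of $D_1$ vanishes, and $SBD^{1}f(0)=0$.

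\emph{Second claim (failure of Borel smoothness).} For $t>0$ we have $f(t)+f(-t)-2f(0)=2t$, so the Borel smoothness integrand equals $2$. Its average over $(0,h)$ is $2$ for every $h$, so the limit is $2\neq 0$ and $f$ is not Borel smooth at $0$.

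\emph{Third claim, part one: identifying $T_f'$.} Split $\int|x|\psi'(x)\,dx$ at $0$ and integrate by parts on each half-line. The boundary terms vanish because $|x|=0$ at $x=0$ and $\psi$ has compact support. This gives
\[
\langle T_f',\psi\rangle=-\int_{\mathbb{R}}|x|\psi'(x)\,dx=\int_{\mathbb{R}}\operatorname{sgn}(x)\psi(x)\,dx .
\]

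\emph{Third claim, part two: no {\L}ojasiewicz point value.} Changing variables,
\[
\langle \operatorname{sgn},\phi_{0,\varepsilon}\rangle=\int_{\mathbb{R}}\operatorname{sgn}(u)\phi(u)\,du,
\]
which is independent of $\varepsilon$. Suppose a point value $L$ existed. Then we would need
\[
\int_{\mathbb{R}}\operatorname{sgn}(u)\phi(u)\,du=L\int_{\mathbb{R}}\phi(u)\,du\qquad\text{for every }\phi\in\mathcal{D}(\mathbb{R}).
\]
Take $\phi\ge 0$ nonzero with support in $(0,\infty)$; this forces $L=1$. Take $\phi\ge 0$ nonzero with support in $(-\infty,0)$; this forces $L=-1$. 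The contradiction shows no point value exists.

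\emph{Consistency check.} For even $\phi$ the pairing $\int\operatorname{sgn}(u)\phi(u)\,du$ is $0$, matching Theorem~\ref{th:SBD1-dpv} with $L=0$.

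There is no real obstacle in this argument. The only step needing care is the integration by parts on each half-line, which is justified because $|x|$ is Lipschitz and vanishes at $0$.
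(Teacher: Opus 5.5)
Your proposal is correct and follows the paper's proof: the same three computations, namely $D_1(t;0;f)\equiv 0$, the Borel-smoothness average being identically $2$, and $\langle T_f',\phi_{0,\varepsilon}\rangle=\int_{\mathbb{R}}\operatorname{sgn}(u)\phi(u)\,du$ independent of $\varepsilon$. Your version is slightly more explicit than the paper's, which asserts $T_f'=\operatorname{sgn}$ without derivation and only shows failure for $L=0$, implicitly using that a full point value must equal the symmetric value $0$. You instead verify $T_f'=\operatorname{sgn}$ by integrating by parts and rule out every candidate $L$ with one-sided test functions.
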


\begin{proof}
For $t>0$,
\[
D_1(t;0;f)=\frac{|t|-|-t|}{2t}=0,
\]
so $SBD^{1}f(0)=0$.
On the other hand,
\[
\frac{1}{h}\int_0^h\frac{f(t)+f(-t)-2f(0)}{t}\,dt
=\frac{1}{h}\int_0^h 2\,dt=2,
\]
so $f$ is not Borel smooth at $0$.
Since the distributional derivative of $|x|$ is the regular distribution $\operatorname{sgn}$,
we have for every $\phi\in\mathcal{D}(\mathbb{R})$,
\[
\langle T_f',\phi_{0,\varepsilon}\rangle
=\int_{\mathbb{R}}\operatorname{sgn}(u)\phi(u)\,du.
\]
This vanishes for every even $\phi$, but it does not equal
$0\cdot \int_{\mathbb{R}}\phi(u)\,du$ for arbitrary $\phi$.
Thus $T_f'$ has symmetric distributional point value $0$ at $0$, but it has no full
{\L}ojasiewicz point value there.
\end{proof}

\subsection{Point values associated with the Borel derivative}\label{subsec:bd-point-values}

The one-sided Borel averages can be handled in the same way.
They lead naturally to right and left distributional point values of $T_f'$.

For $t>0$ define
\[
B_+(t;x_0;f)=\frac{f(x_0+t)-f(x_0)}{t},
\qquad
B_-(t;x_0;f)=\frac{f(x_0)-f(x_0-t)}{t}.
\]
Then
\[
BD_1^+f(x_0)=\lim_{h\to0^+}\frac{1}{h}\int_0^h B_+(t;x_0;f)\,dt,
\qquad
BD_1^-f(x_0)=\lim_{h\to0^+}\frac{1}{h}\int_0^h B_-(t;x_0;f)\,dt,
\]
whenever these limits exist finitely.
If $BD_1^+f(x_0)=BD_1^-f(x_0)$, then their common value is denoted by $BD_1f(x_0)$ and is
called the ordinary Borel derivative of $f$ at $x_0$.

\begin{definition}
Let
\[
\mathcal{D}_+(\mathbb{R})=\{\phi\in\mathcal{D}(\mathbb{R}) : \operatorname{supp}\phi\subset [0,\infty)\},
\qquad
\mathcal{D}_-(\mathbb{R})=\{\phi\in\mathcal{D}(\mathbb{R}) : \operatorname{supp}\phi\subset (-\infty,0]\}.
\]
We say that $T\in\mathcal{D}'(\mathbb{R})$ has \emph{right distributional point value} $L_+$ at $x_0$ if
\[
\lim_{\varepsilon\to0^+}\langle T,\phi_{x_0,\varepsilon}\rangle
=
L_+\int_{\mathbb{R}}\phi(u)\,du
\qquad \text{for every }\phi\in\mathcal{D}_+(\mathbb{R}),
\]
and \emph{left distributional point value} $L_-$ at $x_0$ if
\[
\lim_{\varepsilon\to0^+}\langle T,\phi_{x_0,\varepsilon}\rangle
=
L_-\int_{\mathbb{R}}\phi(u)\,du
\qquad \text{for every }\phi\in\mathcal{D}_-(\mathbb{R}).
\]
\end{definition}

These one-sided point values are local versions of distributional lateral limits.
In the notation of the next theorem, the limiting distribution is
\(L_-H(-u)+L_+H(u)\), so the result may also be viewed as a jump-behavior
statement of the type studied in \cite{VindasEstrada2007Reg,VindasEstrada2008Jump}.

\begin{theorem}\label{th:bd-point-value-splitting} Let $f\in L^1_{\mathrm{loc}}(\mathbb{R})$ and let $x_0\in\mathbb{R}$.
Assume that $BD_1^+f(x_0)=L_+$ and $BD_1^-f(x_0)=L_-$ both exist finitely.
Then, for every $\phi\in\mathcal{D}(\mathbb{R})$,
\[
\lim_{\varepsilon\to0^+}\langle T_f',\phi_{x_0,\varepsilon}\rangle
=
L_+\int_0^\infty \phi(u)\,du
+
L_-\int_{-\infty}^0 \phi(u)\,du.
\]
Consequently, $T_f'$ has right distributional point value $L_+$ and left distributional point value
$L_-$ at $x_0$.
Moreover, $T_f'$ has the {\L}ojasiewicz distributional point value at $x_0$ if and only if
$L_+=L_-$; in that case the point value equals this common value.
\end{theorem}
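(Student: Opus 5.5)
We will reduce the pairing with $T_f'$ to two one-sided weighted averages of $B_\pm$ and then apply Lemma \ref{lem:abelian} to each. Fix $\phi\in\mathcal{D}(\mathbb{R})$. As in the proof of Lemma \ref{lem:kernel-representation}, the substitution $x=x_0+\varepsilon u$ gives
\[
\langle T_f',\phi_{x_0,\varepsilon}\rangle=-\frac{1}{\varepsilon}\int_{\mathbb{R}}f(x_0+\varepsilon u)\phi'(u)\,du .
\]
Since $\int_{\mathbb{R}}\phi'(u)\,du=0$, we may subtract $f(x_0)$ inside the integral. We then split the integral at $u=0$. On $(0,\infty)$ we get $-\int_0^\infty B_+(\varepsilon u;x_0;f)\,u\phi'(u)\,du$. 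On $(-\infty,0)$ we substitute $u=-v$; the integrand becomes $f(x_0-\varepsilon v)-f(x_0)=-\varepsilon v\,B_-(\varepsilon v;x_0;f)$, so that piece equals $\int_0^\infty B_-(\varepsilon v;x_0;f)\,v\phi'(-v)\,dv$.

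Next we define the kernels $W_+(u)=-u\phi'(u)$ and $W_-(v)=v\phi'(-v)$ for $u,v\ge0$. Both belong to $C_c^1([0,\infty))$. Integration by parts, with the boundary term $u\phi(u)$ vanishing at $0$ and at $\infty$, gives $\int_0^\infty W_+=\int_0^\infty\phi$. Similarly, writing $v\phi'(-v)=-v\,\frac{d}{dv}[\phi(-v)]$, we get $\int_0^\infty W_-=\int_0^\infty\phi(-v)\,dv=\int_{-\infty}^0\phi$. The functions $B_\pm(\cdot;x_0;f)$ are in $L^1_{\mathrm{loc}}(0,\infty)$ because $f$ is locally integrable; Lemma \ref{lem:abelian} in fact only uses integrability near $0$, and the kernels have compact support. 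Their averages converge to $L_\pm$ by hypothesis. Lemma \ref{lem:abelian} therefore yields the displayed limit $L_+\int_0^\infty\phi+L_-\int_{-\infty}^0\phi$.

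The consequences then follow directly from this formula. For $\phi\in\mathcal{D}_+$ we have $\int_{-\infty}^0\phi=0$, so the limit is $L_+\int_{\mathbb{R}}\phi$. The case $\phi\in\mathcal{D}_-$ is symmetric. If $L_+=L_-$, the limit is $L\int_{\mathbb{R}}\phi$ for every $\phi$. Conversely, suppose a {\L}ojasiewicz value $L$ exists. Testing a nonnegative nonzero $\phi\in\mathcal{D}_+$ gives $L=L_+$, and testing one in $\mathcal{D}_-$ gives $L=L_-$, so $L_+=L_-$.

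The main obstacle is the bookkeeping on the left half-line, namely getting the sign of $W_-$ right and checking that its integral equals $\int_{-\infty}^0\phi$. Everything else is a direct reuse of the two lemmas.
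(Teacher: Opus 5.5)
Your proof is correct and follows the paper's argument essentially step for step. You subtract $f(x_0)$ using $\int_{\mathbb{R}}\phi'=0$, use the same kernels $W_+(u)=-u\phi'(u)$ and $W_-(u)=u\phi'(-u)$, apply Lemma \ref{lem:abelian} to $B_\pm$, and use the same one-sided test functions for the converse. One small precision: integrability of $B_\pm$ on $(0,h)$ near $0$ comes from the hypothesis that the Borel averages exist, not from local integrability of $f$ alone, but this does not affect the argument.
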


\begin{proof}
Let $\phi\in\mathcal{D}(\mathbb{R})$ be arbitrary.
Since $\int_{\mathbb{R}}\phi'(u)\,du=0$, we may write
\begin{align*}
\langle T_f',\phi_{x_0,\varepsilon}\rangle
&=
-\frac{1}{\varepsilon}\int_{\mathbb{R}}[f(x_0+\varepsilon u)-f(x_0)]\phi'(u)\,du\\
&=I_+(\varepsilon)+I_-(\varepsilon),
\end{align*}
where
\[
I_+(\varepsilon)
=
-\frac{1}{\varepsilon}\int_0^\infty [f(x_0+\varepsilon u)-f(x_0)]\phi'(u)\,du
=
\int_0^\infty B_+(\varepsilon u;x_0;f)\,W_+(u)\,du
\]
with
\[
W_+(u)=-u\phi'(u),
\]
and
\begin{align*}
I_-(\varepsilon)
&=
-\frac{1}{\varepsilon}\int_{-\infty}^0 [f(x_0+\varepsilon u)-f(x_0)]\phi'(u)\,du\\
&=
-\frac{1}{\varepsilon}\int_0^\infty [f(x_0-\varepsilon u)-f(x_0)]\phi'(-u)\,du\\
&=
\int_0^\infty B_-(\varepsilon u;x_0;f)\,W_-(u)\,du
\end{align*}
with
\[
W_-(u)=u\phi'(-u).
\]

Now $W_+,W_-\in C_c^1([0,\infty))$, and integration by parts gives
\[
\int_0^\infty W_+(u)\,du=\int_0^\infty \phi(u)\,du,
\qquad
\int_0^\infty W_-(u)\,du=\int_{-\infty}^0 \phi(u)\,du.
\]
Applying Lemma \ref{lem:abelian} to $B_+$ and $B_-$, we obtain
\[
I_+(\varepsilon)\to L_+\int_0^\infty \phi(u)\,du,
\qquad
I_-(\varepsilon)\to L_-\int_{-\infty}^0 \phi(u)\,du.
\]
Adding the two limits we get the required formula.
If $L_+=L_-$, then the displayed limit becomes
\[
\lim_{\varepsilon\to0^+}\langle T_f',\phi_{x_0,\varepsilon}\rangle
=L_+\int_{\mathbb{R}}\phi(u)\,du,
\]
so $T_f'$ has the full {\L}ojasiewicz point value $L_+$ at $x_0$.
Conversely, assume that $T_f'$ has a full {\L}ojasiewicz point value $L$ at $x_0$.
Choose $\phi_+\in\mathcal{D}_+(\mathbb{R})$ and $\phi_-\in\mathcal{D}_-(\mathbb{R})$ such that
$\int_{\mathbb{R}}\phi_\pm(u)\,du=1$.
Applying the displayed formula first to $\phi_+$ and then to $\phi_-$ gives
$L_+=L$ and $L_-=L$.
Hence $L_+=L_-$.
\end{proof}

\begin{corollary}
If $BD_1f(x_0)$ exists finitely, then $T_f'$ has the {\L}ojasiewicz distributional
point value $BD_1f(x_0)$ at $x_0$.
\end{corollary}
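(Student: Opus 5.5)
This corollary follows directly from Theorem \ref{th:bd-point-value-splitting}. By the definition in Subsection \ref{subsec:bd-point-values}, saying that $BD_1f(x_0)$ exists finitely means two things: both one-sided Borel derivatives $BD_1^+f(x_0)$ and $BD_1^-f(x_0)$ exist finitely, and they are equal. Write $L$ for their common value. The first step is to record $L_+=L_-=L$, which puts us exactly in the setting of Theorem \ref{th:bd-point-value-splitting}.

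Next, apply the limit formula of that theorem to an arbitrary $\phi\in\mathcal{D}(\mathbb{R})$:
\[
\lim_{\varepsilon\to0^+}\langle T_f',\phi_{x_0,\varepsilon}\rangle
=L\int_0^\infty\phi(u)\,du+L\int_{-\infty}^0\phi(u)\,du
=L\int_{\mathbb{R}}\phi(u)\,du .
\]
This is precisely the definition of the {\L}ojasiewicz distributional point value $L=BD_1f(x_0)$ of $T_f'$ at $x_0$. Equivalently, one can quote the ``if'' direction of the last assertion of Theorem \ref{th:bd-point-value-splitting}.

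There is no real obstacle here. The only point to check is the interpretation of ``$BD_1f(x_0)$ exists finitely'': it must include the finite existence of both one-sided limits, not merely that some combined two-sided average converges. This is how the paper defines it, so the hypotheses of Theorem \ref{th:bd-point-value-splitting} are satisfied verbatim.
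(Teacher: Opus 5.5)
Your proposal is correct and matches the paper. The corollary is exactly the $L_+=L_-$ case of Theorem \ref{th:bd-point-value-splitting}, where the limit formula collapses to $BD_1f(x_0)\int_{\mathbb{R}}\phi(u)\,du$, and your reading of ``$BD_1f(x_0)$ exists finitely'' (both one-sided Borel derivatives exist finitely and are equal) is the paper's own definition.
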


\begin{remark} remark ar prothom dikta
Assume that the one-sided Borel derivatives exist finitely and write
$BD_1^+f(x_0)=L_+$ and $BD_1^-f(x_0)=L_-$.
Then averaging the identities
\[
\frac{f(x_0+t)-f(x_0-t)}{2t}
=\frac12\left(\frac{f(x_0+t)-f(x_0)}{t}
+\frac{f(x_0)-f(x_0-t)}{t}\right)
\]
and
\[
\frac{f(x_0+t)+f(x_0-t)-2f(x_0)}{t}
=\frac{f(x_0+t)-f(x_0)}{t}
-\frac{f(x_0)-f(x_0-t)}{t}
\]
gives
\[
SBD^1f(x_0)=\frac{L_++L_-}{2}.
\]
It also shows that Borel smoothness at $x_0$ is equivalent to $L_+=L_-$.
In particular, if the first-order Borel derivative $BD_1f(x_0)$ exists, then $SBD^1f(x_0)$
also exists and $f$ is Borel smooth at $x_0$.

From the point-value point of view, the first-order Borel derivative is stronger than the first
symmetric Borel derivative.
For $SBD^1$, the conclusion is in general only a symmetric distributional point value, and Borel
smoothness is used here to pass to the full {\L}ojasiewicz point value.
In contrast, for the first-order Borel derivative, the equality of the right and left Borel averages
is already enough to give the full distributional point value.

For example, let $f(x)=x_+=\max\{x,0\}$.
Then
\[
BD_1^+f(0)=1,
\qquad
BD_1^-f(0)=0.
\]
Hence $T_f'=H$ has right distributional point value $1$ and left distributional point value $0$
at $0$, but it has no full {\L}ojasiewicz point value there.
This example shows why the equality $BD_1^+f(x_0)=BD_1^-f(x_0)$ is essential.
\end{remark}

\subsection{The second symmetric Borel derivative}

\begin{theorem}\label{th:SBD2-dpv}
Let $f\in L^1_{\mathrm{loc}}(\mathbb{R})$ and let $x_0\in\mathbb{R}$.
If $SBD^{2}f(x_0)$ exists finitely, say $SBD^{2}f(x_0)=M$, then for every even
$\phi\in\mathcal{D}(\mathbb{R})$,
\[
\lim_{\varepsilon\to 0^+}\langle T_f'',\phi_{x_0,\varepsilon}\rangle
=M\int_{\mathbb{R}}\phi(u)\,du.
\]
In particular, $T_f''$ has symmetric distributional point value $M$ at $x_0$.
\end{theorem}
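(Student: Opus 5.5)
The plan is to copy the proof of Theorem \ref{th:SBD1-dpv}, using the second-order halves of the two lemmas instead of the first-order ones. Fix an even $\phi\in\mathcal{D}(\mathbb{R})$ and let $W_2(u)=u^2\phi''(u)$ be the kernel from Lemma \ref{lem:kernel-representation}. That lemma gives two things:
\[
W_2\in C_c^1([0,\infty)),
\qquad
\int_0^\infty W_2(u)\,du=\int_{\mathbb{R}}\phi(u)\,du .
\]
It also gives the identity \eqref{eq:pairing-fdoubleprime}, valid for every $\varepsilon>0$:
\[
\langle T_f'',\phi_{x_0,\varepsilon}\rangle=\int_0^\infty D_2(\varepsilon u;x_0;f)\,W_2(u)\,du .
\]
So the pairing on the left is exactly a rescaled weighted average of the second symmetric difference quotient.

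Next put $g(t)=D_2(t;x_0;f)$ for $t>0$. The hypothesis $SBD^2f(x_0)=M$ says precisely that
\[
\frac1h\int_0^h g(t)\,dt\to M \quad\text{as } h\to0^+ .
\]
Lemma \ref{lem:abelian} then applies with $W=W_2$ and $L=M$. It yields
\[
\int_0^\infty g(\varepsilon u)W_2(u)\,du\;\to\; M\int_0^\infty W_2(u)\,du=M\int_{\mathbb{R}}\phi(u)\,du .
\]
Combining this with \eqref{eq:pairing-fdoubleprime} gives the claimed limit. The statement about symmetric distributional point values is just this limit read through the definition of $\mathrm{sv}$.

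The only point needing care is checking that $g$ satisfies the hypotheses of Lemma \ref{lem:abelian}, namely $g\in L^1_{\mathrm{loc}}(0,\infty)$. Two facts handle this:
\begin{itemize}
\item On any compact subinterval of $(0,\infty)$, $1/t^2$ is bounded and $f$ is locally integrable, so $g$ is integrable there.
\item Integrability of $g$ near $0$ is implicit in the hypothesis, since the defining averages $\frac1h\int_0^h g$ are assumed to exist.
\end{itemize}
In fact the proof of Lemma \ref{lem:abelian} only uses that $G(h)=\frac1h\int_0^h g$ is defined and bounded near $0$ together with its pointwise limit, and the hypothesis gives exactly that.

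I also expect no obstruction from the odd part of $\phi$ here, because only even $\phi$ are tested. Extending to odd $\phi$ is exactly the step that fails in general at second order, which is why the theorem claims only a symmetric point value.
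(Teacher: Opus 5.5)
Your proof is correct and is essentially the paper's own argument: fix an even $\phi$, use \eqref{eq:pairing-fdoubleprime} to write $\langle T_f'',\phi_{x_0,\varepsilon}\rangle=\int_0^\infty D_2(\varepsilon u;x_0;f)W_2(u)\,du$, and apply Lemma~\ref{lem:abelian} with $g=D_2(\cdot;x_0;f)$, $W=W_2$, $L=M$, using $\int_0^\infty W_2(u)\,du=\int_{\mathbb{R}}\phi(u)\,du$. Your extra check that $g$ satisfies the integrability hypothesis of Lemma~\ref{lem:abelian} is a harmless refinement that the paper leaves implicit.
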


\begin{proof}
Fix an even $\phi\in\mathcal{D}(\mathbb{R})$ and let $W_2$ be as in
Lemma \ref{lem:kernel-representation}.
Put $g(t)=D_2(t;x_0;f)$ for $t>0$.
By definition of $SBD^{2}f(x_0)$,
\[
\lim_{h\to 0^+}\frac{1}{h}\int_0^h g(t)\,dt=M.
\]
Since $\int_0^\infty W_2(u)\,du=\int_{\mathbb{R}}\phi(u)\,du$,
Lemma \ref{lem:abelian} gives
\[
\lim_{\varepsilon\to 0^+}\int_0^\infty g(\varepsilon u)W_2(u)\,du
=M\int_{\mathbb{R}}\phi(u)\,du.
\]
By Lemma \ref{lem:kernel-representation}, the integral on the left equals
$\langle T_f'',\phi_{x_0,\varepsilon}\rangle$.
This proves the theorem.
\end{proof}

\begin{proposition}\label{prop:second-order-sharp}
The conclusion of Theorem \ref{th:SBD2-dpv} cannot, in general, be upgraded to a full
{\L}ojasiewicz point-value statement.
Indeed, for $f(x)=x_+^2$ one has
\[
SBD^{1}f(0)=0,
\qquad
SBD^{2}f(0)=1,
\]
but $T_f''=2H$ has no {\L}ojasiewicz point value at $0$.
\end{proposition}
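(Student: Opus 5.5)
The proof is a direct computation in three parts, modelled on the first-order example for $|x|$ and the remark on $x_+$.

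\emph{Symmetric Borel derivatives.} Take $f(x)=x_+^2$ and $x_0=0$, so $f(0)=0$. For $t>0$ we have $f(t)=t^2$ and $f(-t)=0$. Hence
\[
D_1(t;0;f)=\frac{t^2-0}{2t}=\frac{t}{2},
\qquad
D_2(t;0;f)=\frac{t^2+0-0}{t^2}=1.
\]
Averaging over $[0,h]$ gives $\frac1h\int_0^h \frac t2\,dt=\frac h4\to0$, so $SBD^1f(0)=0$. The second average is identically $1$, so $SBD^2f(0)=1$.

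\emph{The second distributional derivative.} Since $f\in C^1$ with $f'(x)=2x_+$, which is absolutely continuous, $T_f'$ is the regular distribution $2x_+$. One more differentiation gives $T_f''=2H$. This can be checked directly: integrating by parts twice in $\int_0^\infty x^2\psi''(x)\,dx$ gives $2\int_0^\infty\psi(x)\,dx$.

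\emph{No full point value.} As in the proof for $|x|$, the dilation invariance of $H$ gives
\[
\langle T_f'',\phi_{0,\varepsilon}\rangle=2\int_0^\infty\phi(u)\,du
\]
for every $\varepsilon>0$. Suppose $T_f''$ had a {\L}ojasiewicz point value $L$ at $0$. Choose $\phi_+\in\mathcal{D}_+(\mathbb{R})$ and $\phi_-\in\mathcal{D}_-(\mathbb{R})$, each with integral $1$.
\begin{itemize}
\item Testing with $\phi_+$ gives $L=2$.
\item Testing with $\phi_-$ gives $L=0$.
\end{itemize}
This is a contradiction, so no full point value exists. For even $\phi$ the pairing equals $\int_{\mathbb{R}}\phi$, which is consistent with Theorem \ref{th:SBD2-dpv} and the value $M=1$.

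No step is a real obstacle. The only point needing care is to justify $T_f''=2H$ rigorously rather than differentiating pointwise, and the double integration by parts handles this.
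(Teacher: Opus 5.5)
Your proposal is correct and follows the paper's proof essentially step by step. You use the same computation of $D_1(t;0;f)=t/2$ and $D_2(t;0;f)=1$, the same identification $T_f''=2H$ via $f'=2x_+$, and the same dilation-invariant pairing $\langle T_f'',\phi_{0,\varepsilon}\rangle=2\int_0^\infty\phi(u)\,du$. Your test functions $\phi_\pm$ supported on half-lines, which force $L=2$ and $L=0$, only make explicit the paper's remark that the limit depends on the odd part of $\phi$; this is the same device the paper uses in the converse part of Theorem~\ref{th:bd-point-value-splitting}.
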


\begin{proof}
For $t>0$ we have
\[
D_1(t;0;f)=\frac{t^2-0}{2t}=\frac{t}{2},
\qquad
D_2(t;0;f)=\frac{t^2+0-0}{t^2}=1.
\]
Hence $SBD^{1}f(0)=0$ and $SBD^{2}f(0)=1$.
Since $f'=2x_+$ almost everywhere and $2x_+$ is continuous,
its distributional derivative is the regular distribution $T_f''=2H$.
Therefore, for any $\phi\in\mathcal{D}(\mathbb{R})$,
\[
\langle T_f'',\phi_{0,\varepsilon}\rangle
=2\int_0^\infty \phi(u)\,du.
\]
For even $\phi$ this equals $\int_{\mathbb{R}}\phi(u)\,du$, so the symmetric point value is $1$,
exactly as predicted by Theorem \ref{th:SBD2-dpv}.
However, the limit depends on the odd part of $\phi$, and therefore $T_f''$ has no full
{\L}ojasiewicz point value at $0$.
\end{proof}

\subsection{Point-supported singularities and examples}

\begin{proposition}\label{prop:delta-parity} proposition ar statement r proof ar prothomta
For each $j\in\mathbb{N}\cup\{0\}$ one has
\[
\langle \delta_{x_0}^{(j)},\phi_{x_0,\varepsilon}\rangle
=(-1)^j\varepsilon^{-j-1}\phi^{(j)}(0),
\qquad \phi\in\mathcal{D}(\mathbb{R}).
\]
Consequently:
\begin{enumerate}
\item[(i)] $\delta_{x_0}^{(2m)}$ has no symmetric distributional point value at $x_0$ for any $m\ge 0$;
\item[(ii)] $\delta_{x_0}^{(2m+1)}$ has symmetric distributional point value $0$ at $x_0$ for any $m\ge 0$;
\item[(iii)] $\delta_{x_0}^{(j)}$ has no full {\L}ojasiewicz point value at $x_0$ for any $j\ge 0$.
\end{enumerate}
\end{proposition}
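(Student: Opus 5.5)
The plan is to compute the pairing directly from the definition of $\delta_{x_0}^{(j)}$ and then read off (i)--(iii) from how the result scales in $\varepsilon$.

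\emph{The identity.} By definition, $\langle \delta_{x_0}^{(j)},\psi\rangle=(-1)^j\psi^{(j)}(x_0)$. Differentiating $\phi_{x_0,\varepsilon}(x)=\varepsilon^{-1}\phi((x-x_0)/\varepsilon)$ $j$ times by the chain rule gives
\[
\phi_{x_0,\varepsilon}^{(j)}(x)=\varepsilon^{-1-j}\phi^{(j)}\!\left(\frac{x-x_0}{\varepsilon}\right).
\]
Evaluating at $x=x_0$ yields the displayed formula $\langle \delta_{x_0}^{(j)},\phi_{x_0,\varepsilon}\rangle=(-1)^j\varepsilon^{-j-1}\phi^{(j)}(0)$.

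\emph{Parity.} Everything else rests on a parity fact: if $\phi$ is even, then $\phi^{(j)}$ is even for even $j$ and odd for odd $j$. In the odd case this forces $\phi^{(j)}(0)=0$.

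\emph{Item (ii).} Take $j=2m+1$. For every even $\phi$ the pairing is identically $0$, so the limit is $0=0\cdot\int\phi$. Hence the symmetric point value exists and equals $0$.

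\emph{Item (i).} Take $j=2m$. We need, for every candidate $L$, one even $\phi$ for which the limit fails to equal $L\int\phi$. Choose an even $\phi\in\mathcal D(\mathbb R)$ with $\phi^{(2m)}(0)\neq 0$; for instance, let $\phi(u)=u^{2m}\chi(u)$ with $\chi$ an even cutoff equal to $1$ near $0$, so that $\phi^{(2m)}(0)=(2m)!$. Then $|\langle\delta_{x_0}^{(2m)},\phi_{x_0,\varepsilon}\rangle|=\varepsilon^{-2m-1}(2m)!\to\infty$. No finite limit exists for this $\phi$, so there is no symmetric point value.

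\emph{Item (iii).} For even $j$ this follows from (i), using Remark~\ref{rem:symmetric-pv}: a full point value would imply a symmetric one. For odd $j$, choose an odd test function with $\phi^{(j)}(0)\neq0$, namely $\phi(u)=u^{j}\chi(u)$. Then $\int\phi=0$, but the pairing is $(-1)^j\varepsilon^{-j-1}j!$, which diverges. So the required limit $L\int\phi=0$ fails for every $L$.

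The only real care needed is in these constructions: checking that $u^j\chi(u)$ has the claimed parity and the correct $j$-th derivative at $0$. Both are immediate by Leibniz's rule, since $\chi\equiv1$ near $0$. There is no genuine obstacle.
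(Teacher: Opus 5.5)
Your proposal is correct and follows the paper's proof. It uses the same chain-rule computation of $\langle \delta_{x_0}^{(j)},\phi_{x_0,\varepsilon}\rangle$, the same parity argument for (i) and (ii), and the same divergence of $\varepsilon^{-j-1}$ for a suitable test function. The differences are minor: you construct the test functions explicitly as $u^j\chi(u)$, and in (iii) you split by the parity of $j$ and deduce the even case from (i). The paper instead takes, for each $j$, a single mean-zero test function with $\phi^{(j)}(0)\neq 0$.
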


\begin{proof}
By definition of distributional derivatives and of the scaled test function,
\[
\langle \delta_{x_0}^{(j)},\phi_{x_0,\varepsilon}\rangle
=(-1)^j(\phi_{x_0,\varepsilon})^{(j)}(x_0)
=(-1)^j\varepsilon^{-j-1}\phi^{(j)}(0).
\]
If $\phi$ is even and $j$ is odd, then $\phi^{(j)}(0)=0$, which proves (ii).
If $\phi$ is even and $j$ is even, choose an even test function with
$\phi^{(j)}(0)\neq 0$. Then the displayed quantity is a nonzero constant
multiple of $\varepsilon^{-j-1}$, and hence it has no finite limit as
$\varepsilon\to0^+$. This proves (i).

For (iii), choose any test function $\phi$ with
$\int_{\mathbb{R}}\phi(u)\,du=0$ and $\phi^{(j)}(0)\neq 0$.
Then the right-hand side of the {\L}ojasiewicz point-value formula would be
$0$, whereas the left-hand side is again a nonzero constant multiple of
$\varepsilon^{-j-1}$ and therefore has no finite limit. Thus no
$\delta_{x_0}^{(j)}$ has a full point value.
\end{proof}

\begin{remark}
Proposition \ref{prop:delta-parity} shows why parity matters here.
Even test functions detect point-supported singularities of even order, but they
do not detect those of odd order. Together with Proposition
\ref{prop:second-order-sharp}, this explains why Theorem \ref{th:SBD2-dpv} is
only a symmetric point-value statement. Without additional assumptions, no full
second-order point value follows.
\end{remark}
The preceding propositions already give the examples needed to show the limits of the main results.
We close with two elementary examples. The first shows that the symmetric Borel derivatives agree with
the ordinary derivatives in the smooth case. The second shows how a nonzero Dirac mass in $T_f''$
prevents the existence of a finite second symmetric Borel derivative.

\medskip\noindent
\textbf{Example 1.}
If $f\in C^1$ in a neighborhood of $x_0$, then $D_1(t;x_0;f)\to f'(x_0)$ as $t\to 0$,
hence $SBD^{1}f(x_0)=f'(x_0)$.
If $f\in C^2$ in a neighborhood of $x_0$, then $D_2(t;x_0;f)\to f''(x_0)$ as $t\to 0$,
hence $SBD^{2}f(x_0)=f''(x_0)$.

\medskip\noindent
\textbf{Example 2.}
Let $f(x)=|x|$.
Then, at $x_0=0$,
\[
D_1(t;0;f)=0,
\qquad
D_2(t;0;f)=\frac{2}{t}
\qquad (t>0).
\]
Hence $SBD^{1}f(0)=0$, but
\[
\frac{1}{h}\int_0^h D_2(t;0;f)\,dt
=
\frac{2}{h}\int_0^h \frac{dt}{t}
\]
diverges, so $SBD^{2}f(0)$ does not exist.
This agrees with the distributional identity $T_f''=2\delta_0$:
by Proposition \ref{prop:delta-parity}.
\section{Conclusion} 
The paper shows that Borel-type generalized derivatives carry precise distributional information at
a point.
The first and second symmetric Borel derivatives give symmetric distributional point values of
$T_f'$ and $T_f''$, respectively.
In the first-order symmetric case, Borel smoothness is enough to pass from the symmetric point
value to the full {\L}ojasiewicz point value of $T_f'$.

The ordinary Borel derivative gives a complementary picture.
Its one-sided forms determine the right and left distributional point values of $T_f'$.
When the two one-sided Borel averages agree, no additional smoothness condition is needed: the
full distributional point value follows.
This makes clear the difference between the ordinary and symmetric Borel settings.

The examples also mark the limits of the results.
They show that a symmetric point value need not be a full point value, that point-supported
singularities are controlled by parity under even testing, and that the second-order symmetric result
cannot be upgraded automatically.
Thus the main contribution is the bridge between two established theories: Borel-type generalized
derivatives on one side and classical distributional point values on the other.

\end{document}